\newtheorem{problem}{Problem}
\newtheorem{theorem}{Theorem}
\newtheorem{assumption}{Assumption}
\newtheorem{lemma}{Lemma}
\newtheorem{note}{Remark}
\DeclareMathOperator{\rank}{rank\,}
\DeclareMathOperator{\nul}{null\,}
\newcommand{\norm}[1]{\left\lVert#1\right\rVert}
\title{\LARGE \bf
Data-Driven Output Matching of Output-Generalized Bilinear and Linear Parameter-Varying systems
}
\author{Leander Hemelhof$^{1}$, Ivan Markovsky$^{2}$ and Panagiotis Patrinos$^{3}$
\thanks{*This work is supported by: the Research Foundation Flanders (FWO) research projects G081222N, G033822N, G0A0920N; European Union’s Horizon 2020 research and innovation programme under the Marie Skłodowska-Curie grant agreement No. 953348;
Research Council KU Leuven C1 project No. C14/18/068; Fonds de la Recherche Scientifique – FNRS and the Fonds Wetenschappelijk Onderzoek – Vlaanderen under EOS project no 30468160 (SeLMA). FNRS--FWO EOS Project 30468160. IM is funded by the Catalan Institution for Research and Advanced Studies (ICREA).}
\thanks{$^{1,3}$L. Hemelhof and P. Patrinos are with the Department of Electrical Engineering (ESAT-STADIUS), KU Leuven, Kasteelpark Arenberg 10, 3001 Leuven, Belgium. Email: {\tt\small \{leander.hemelhof, panos.patrinos\}@esat.kuleuven.be}}%
\thanks{$^{2}$Ivan Markovsky is with the the International Centre for Numerical Methods in Engineering (CIMNE), Gran Capitàn, Barcelona, Spain and the Catalan Institution for Research and Advanced Studies (ICREA), Pg. Lluis Companys 23, Barcelona, Spain. Email: {\tt\small imarkovsky@cimne.upc.edu}}%
}
\begin{document}

\maketitle
\thispagestyle{empty}
\pagestyle{empty}

\begin{abstract}
    There is a growing interest in data-driven control of nonlinear systems over the last years. In contrast to related works, this paper takes a step back and aims to solve the output matching problem, a problem closely related to the reference tracking control problem, for a broader class of nonlinear systems called output-generalized bilinear, thereby offering a new direction to explore for data-driven control of nonlinear systems. It is shown that discrete time linear parameter-varying systems are included in this model class, with affine systems easily shown to also be included. This paper proposes a method to solve the output matching problem and offers a way to parameterize the solution set with a minimal number of parameters. The proposed model class and method are illustrated using simulations of two real-life systems.
\end{abstract}
\begin{keywords}
Nonlinear system theory, Linear parameter-varying systems, Behavioral approach, Data-driven methods.
\end{keywords}
\section{Introduction}
Data-driven simulation and control of nonlinear systems has been a popular topic over the last years. Some results have already been obtained for specific classes of nonlinear systems. A few examples are simulation of Wiener and Hammerstein systems \cite{berberich_trajectory-based_2020}, control of linear parameter-varying systems with affine dependence on the parameters \cite{verhoek_fundamental_2021,verhoek_data-driven_2021}, simulation of generalized bilinear systems \cite{markovsky_data-driven_2023}, control of second-order discrete Volterra systems \cite{rueda-escobedo_data-driven_2020}, control of discrete time MIMO feedback linearizable systems \cite{alsalti_data-based_2022} and control of flat nonlinear systems \cite{alsalti_data-based_2021}. More general control results also exist, for example making use of local linear approximations \cite{berberich_linear_2022}, kernels \cite{huang_robust_2022,tanaskovic_data-driven_2017} or the Koopman operator \cite{arbabi_data-driven_2018,lian_koopman_2021}.

In this work we solve the output matching problem for a broad class of nonlinear systems. The output matching problem was originally discussed in \cite{markovsky_data-driven_2008} while building up to reference tracking control, showing its usefulness as a stepping stone to more practically useful control problems. Since the output matching problem is closely related to the reference tracking control problem, this work offers an additional direction to explore for data-driven control of nonlinear systems. The main contributions of this paper are the following:
\begin{enumerate}
    \item We define a novel model class that encompasses multiple other model classes discussed in the literature, among which linear parameter-varying systems with known parameter values and affine dependence on the parameters and affine systems. 
    \item We provide a method to solve the output matching problem for this model class in the noiseless case, offering a parameterization of the solution space.
\end{enumerate}

\Cref{sec2} introduces notation and definitions necessary in the following sections. \Cref{sec3} introduces the output-generalized bilinear model class and solves the output matching problem for this model class. \Cref{sec4} shows that linear parameter-varying systems with known parameter values are a specific case of output-generalized bilinear systems, allowing the results from \cref{sec3} to apply to them as well. \Cref{sec5} gives some illustrative simulation examples, while \cref{sec6} gives some more general remarks about the parameterization of the solution space and the minimal data trajectory length. \Cref{sec7} concludes this work and offers some final remarks regarding future work.

\section{Preliminaries}\label{sec2}
In the behavioral framework \cite{willems1991} we define a discrete time dynamical system $\mathcal{B}$ as a set of trajectories. We write for example $\mathcal{B}\subset(\mathbb{R}^q)^\mathbb{N}$, where $(\mathbb{R}^q)^\mathbb{N}$ is the set of $q$-variate signals. $\mathcal{B}|_{L}$ is the subset containing all possible trajectories of length $L$ of the system. $\mathcal{L}^q$ is the set of \textit{linear time-invariant} (LTI) models with $q$ variables. We define the shift operator $\sigma$ as $\sigma y(t)=y(t+1)$. The kronecker product of two matrices $A$ and $B$ is denoted by $A\otimes B$. We define the concatenation operator as $w_1\wedge w_2=\begin{bmatrix}
w_1^\top & w_2^\top
\end{bmatrix}^\top$, with $w_1\in(\mathbb{R}^{q_1})^{T_1}$ and $w_2\in(\mathbb{R}^{q_2})^{T_2}$. The block Hankel matrix of a trajectory $w\in(\mathbb{R}^{q})^{T}$ with $L$ block rows is denoted by $\mathcal{H}_L(w)$, with
\begin{equation*}
    \resizebox{\linewidth}{!}{$\mathcal{H}_L(w)=\begin{bmatrix}
    w(1) & w(2) & \cdots & w(T-L+1)\\
    w(2) & w(3) & \cdots & w(T-L)\\
    \vdots & \vdots & & \vdots\\
    w(L) & w(L+1) & \cdots & w(T)
    \end{bmatrix}\in\mathbb{R}^{qL\times (T-L+1)}.$}
\end{equation*}

Formally, the data-driven output matching problem can be formulated in the behavioral framework as follows \cite[Problem 13]{markovsky_data-driven_2008}:
\begin{problem}[Data-driven output matching]
For a system $\mathcal{B}$ and with
\begin{itemize}
    \item a trajectory $w_{\mathrm{d}}\in(\mathbb{R}^q)^T$ of $\mathcal{B}|_{T}$,
    \item an input-output partitioning $Pw=\begin{bmatrix}
    u\\y
    \end{bmatrix}$ with $P$ a permutation matrix and $q=n_u+n_y$,
    \item a reference output $y_\mathrm{r}\in(\mathbb{R}^{n_y})^L$,
    \item an initial trajectory $w_{\mathrm{ini}}\in(\mathbb{R}^{q})^{T_{\mathrm{ini}}}$ of $\mathcal{B}|_{T_{\mathrm{ini}}}$,
\end{itemize}
find an input $u\in(\mathbb{R}^{n_u})^L$ such that $w_{\mathrm{ini}}\wedge (u,y_\mathrm{r})\in\mathcal{B}|_{T_{\mathrm{ini}}+L}$.\label{prob1}
\end{problem}

This problem does not always have a solution. Consider for example an LTI system, defined by $y(t)=0.5y(t-1)+u(t-1)$. If we give a reference output $y_\mathrm{r}=\begin{bmatrix}
2 & 2
\end{bmatrix}$ while $u_{\mathrm{ini}}=0$ and $y_{\mathrm{ini}}=5$, no matter what input we give to the system, it will be impossible to match the reference output. To deal with this, a closely related problem that always has a solution is given by the reference tracking control problem:
\begin{problem}[Data-driven reference tracking control]
For a system $\mathcal{B}$ and with
\begin{itemize}
    \item a trajectory $w_{\mathrm{d}}\in(\mathbb{R}^q)^T$ of $\mathcal{B}|_{T}$,
    \item an input-output partitioning $Pw=\begin{bmatrix}
    u\\y
    \end{bmatrix}$ with $P$ a permutation matrix and $q=n_u+n_y$,
    \item a reference output $y_\mathrm{r}\in(\mathbb{R}^m)^L$,
    \item an initial trajectory $w_{\mathrm{ini}}\in(\mathbb{R}^q)^{T_{\mathrm{ini}}}$ of $\mathcal{B}|_{T_{\mathrm{ini}}}$,
    \item a cost function $\mathcal{C}(u, y, y_\mathrm{r})$,
\end{itemize}
find an input $u\in(\mathbb{R}^{n_u})^L$ and output $y\in(\mathbb{R}^{n_y})^L$ such that $w_{\mathrm{ini}}\wedge (u,y)\in\mathcal{B}|_{T_{\mathrm{ini}}+L}$\label{prob2} and that minimizes cost function $\mathcal{C}(u, y, y_\mathrm{r})$.
\end{problem}

This paper deals with the considered nonlinear systems by using LTI embedding, where we embed the nonlinearity into an LTI system. This allows us to use results meant for LTI systems. With the goal of designing a data-driven method in mind, we will make use of the data-driven representation of LTI systems. This representation is given by following lemma \cite[Corollary 3]{markovsky_data-driven_2023}:

\begin{lemma}[Data-driven representation]
Consider $\mathcal{B}\in\mathcal{L}^q$, data trajectory $w_{\mathrm{d}}\in\mathcal{B}|_T$ and $L\geq \textbf{l}(\mathcal{B})$. Assuming
\begin{equation}
    \rank \mathcal{H}_L(w_{\mathrm{d}})=\textbf{m}(\mathcal{B})L+\textbf{n}(\mathcal{B}),\label{dd_cond}
\end{equation} then $w\in\mathcal{B}|_L$ if and only if there exists $g\in\mathbb{R}^{T-L+1}$ such that\begin{equation}
    \mathcal{H}_L(w_{\mathrm{d}})g=w.
\end{equation}
\label{dd_rep}
\end{lemma}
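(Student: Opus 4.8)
The plan is to prove the stated equivalence by showing that the image of $\mathcal{H}_L(w_{\mathrm{d}})$ coincides with the restricted behavior $\mathcal{B}|_L$, which I would do via a two-inclusion-plus-dimension-count argument. The starting observation is that since $\mathcal{B}\in\mathcal{L}^q$ is linear and shift-invariant, $\mathcal{B}|_L$ is a linear subspace of $(\mathbb{R}^q)^L\cong\mathbb{R}^{qL}$; both the candidate object $\image\mathcal{H}_L(w_{\mathrm{d}})$ and the target $\mathcal{B}|_L$ are therefore subspaces, so it suffices to compare them as such.

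For the ``if'' direction I would note that each column of $\mathcal{H}_L(w_{\mathrm{d}})$ is a length-$L$ window $\bigl(w_{\mathrm{d}}(i),\dots,w_{\mathrm{d}}(i+L-1)\bigr)$ of the data trajectory. Because $w_{\mathrm{d}}\in\mathcal{B}|_T$ and $\mathcal{B}$ is shift-invariant, every such window lies in $\mathcal{B}|_L$; since $\mathcal{B}|_L$ is a linear subspace, any linear combination $\mathcal{H}_L(w_{\mathrm{d}})g$ of these columns is again in $\mathcal{B}|_L$. This gives the inclusion $\image\mathcal{H}_L(w_{\mathrm{d}})\subseteq\mathcal{B}|_L$ and hence one direction of the claim.

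For the ``only if'' direction I would close the gap by matching dimensions. The key structural fact is that for $L\geq\textbf{l}(\mathcal{B})$ the restricted behavior satisfies $\dim\mathcal{B}|_L=\textbf{m}(\mathcal{B})L+\textbf{n}(\mathcal{B})$: once the horizon is at least the lag, no further algebraic output constraints remain active, and the free coordinates are the $\textbf{m}(\mathcal{B})$ inputs at each of the $L$ steps together with the $\textbf{n}(\mathcal{B})$ degrees of freedom of the initial state. Invoking rank assumption \cref{dd_cond}, the subspace $\image\mathcal{H}_L(w_{\mathrm{d}})\subseteq\mathcal{B}|_L$ has dimension $\rank\mathcal{H}_L(w_{\mathrm{d}})=\textbf{m}(\mathcal{B})L+\textbf{n}(\mathcal{B})=\dim\mathcal{B}|_L$. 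A subspace whose dimension equals that of the ambient subspace must be the whole subspace, so $\image\mathcal{H}_L(w_{\mathrm{d}})=\mathcal{B}|_L$, meaning every $w\in\mathcal{B}|_L$ admits a $g$ with $\mathcal{H}_L(w_{\mathrm{d}})g=w$.

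I expect the main obstacle to be justifying the dimension formula $\dim\mathcal{B}|_L=\textbf{m}(\mathcal{B})L+\textbf{n}(\mathcal{B})$ and pinning down that $L\geq\textbf{l}(\mathcal{B})$ is precisely the condition making it valid; the remaining steps are elementary linear algebra. In a fully self-contained write-up I would derive this count from a minimal kernel or input/state/output representation of $\mathcal{B}$ by solving its difference equations over a window of length $L$, but since the statement is quoted from an established corollary I would instead lean on that result for the dimension of $\mathcal{B}|_L$ and present only the inclusion and rank-matching steps above.
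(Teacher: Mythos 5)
Your proof is correct, but note that the paper does not prove this lemma at all: it is imported verbatim from the cited reference \cite[Corollary 3]{markovsky_data-driven_2023}, so there is no in-paper argument to compare against. Your two-inclusion-plus-dimension-count argument is precisely the standard proof of that cited result: shift-invariance and linearity of $\mathcal{B}$ give $\image\mathcal{H}_L(w_{\mathrm{d}})\subseteq\mathcal{B}|_L$, the behavioral fact $\dim\mathcal{B}|_L=\textbf{m}(\mathcal{B})L+\textbf{n}(\mathcal{B})$ for $L\geq\textbf{l}(\mathcal{B})$ pins down the ambient dimension, and rank condition \cref{dd_cond} forces the two subspaces to coincide, so nothing is missing beyond the dimension formula you already flag as the one external ingredient.
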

Here $\textbf{l}(\mathcal{B})$, $\textbf{m}(\mathcal{B})$ and $\textbf{n}(\mathcal{B})$ are the lag, the number of inputs and the order of the system respectively. They are all properties of the system \cite{willems1991}. Note that since the number of inputs is a property of the system, it is independent from the chosen input-output partitioning.

\section{Output-generalized bilinear systems}\label{sec3}
We consider a MIMO system that can be written in the form
\begin{align*}
    y(t)=f(x_y(t), x_{h(u)}(t),t)=\ &\theta_{\mathrm{lin}}^\top\begin{bmatrix}
    x_\mathrm{y}(t)^\top & x_{h(u)}(t)^\top
    \end{bmatrix}^\top\\
    &+\theta_{\mathrm{nl}}^\top\phi_{\mathrm{nl}}(x_y(t), x_{h(u)}(t), t),
\end{align*}
where
\begin{align*}
    &\phi_{\mathrm{nl}}(x_y(t), x_{h(u)}(t),t)=\phi_{\mathrm{b},0}(x_y(t), t)+\phi_\mathrm{b}(x_y(t), t)\otimes x_{h(u)}(t),\\
    &x_y(t)=\begin{bmatrix}y(t-l)^\top & y(t-l+1)^\top & \hdots & y(t-1)^\top\end{bmatrix}^\top,\\
    &\resizebox{\linewidth}{!}{$x_{h(u)}(t)=\begin{bmatrix}h(u(t-l))^\top & h(u(t-l+1))^\top & \hdots & h(u(t))^\top\end{bmatrix}^\top,$}
\end{align*}
for all $t\in\mathbb{Z}$. Here time is allowed to be negative to capture the influence of the infinite past of the system. In practice we have signals such that $t\in\mathbb{N}$. To ease notation, from this point on we will not write the time dependence explicitly. We only have to remember that $\phi_{\mathrm{nl}}$ can depend on time. This way we obtain
\begin{equation}
    y=f(x_y, x_{h(u)})=\theta_{\mathrm{lin}}^\top\begin{bmatrix}
    x_\mathrm{y}^\top & x_{h(u)}^\top
    \end{bmatrix}^\top+\theta_{\mathrm{nl}}^\top\phi_{\mathrm{nl}}(x_y, x_{h(u)})\label{system1},
\end{equation}
where
\begin{align}
    \phi_{\mathrm{nl}}(x_y, x_{h(u)})&=\phi_{\mathrm{b},0}(x_y)+\phi_\mathrm{b}(x_y)\otimes x_{h(u)},\label{OGB_phi_nl}\\
    x_y&=\begin{bmatrix}\sigma^{-l} y^\top & \sigma^{-l+1} y^\top & \hdots & \sigma^{-1} y^\top\end{bmatrix}^\top,\nonumber\\
    x_{h(u)}&=\begin{bmatrix}\sigma^{-l} h(u)^\top & \sigma^{-l+1}h(u)^\top & \hdots & h(u)^\top\end{bmatrix}^\top.\nonumber
\end{align}
Here $\theta_{\mathrm{lin}}\in\mathbb{R}^{l(n_\mathrm{y}+n_\mathrm{u})+n_\mathrm{u}\times n_\mathrm{y}}$ and $\theta_{\mathrm{nl}}\in\mathbb{R}^{n_\mathrm{b}\times n_\mathrm{y}}$ are constant parameter matrices and $h: \mathbb{R}^{n_u}\rightarrow\mathbb{R}^{n_u}$ is a known bijective map with a known inverse. Functions $\phi_{\mathrm{b}}$ and $\phi_{\mathrm{b,0}}$ are possibly time dependent nonlinear functions of the form $\phi:\mathbb{R}^{n_\mathrm{y}l}\times\mathbb{R}\rightarrow\mathbb{R}^{n_\mathrm{b}}$, where the extra dimension of the input space indicates the time dependence. The main use case of this explicit time dependence is to allow the nonlinear terms to depend on other known signals, which are interpreted here as time-dependent functions. This will be further illustrated later in this section.

A similar SISO model class, called \textit{generalized bilinear}, is introduced in \cite{markovsky_data-driven_2023}. It has the same form as used here, but the nonlinear term is given by \begin{equation*}
    \phi_{\mathrm{nl}}(x)=\phi_{\mathrm{b},0}+\phi_\mathrm{b}(x_u)\otimes x_y.
\end{equation*}
In contrast to this generalized bilinear model class we refer to our model class as output-generalized bilinear.

The output-generalized bilinear model class is given by the sum of some term that depends linearly on the previous outputs and current and previous inputs, and a nonlinear term. The nonlinear term is a generalization of bilinear systems that allows for nonlinear dependence on the previous outputs, mixed with current and previous inputs. More specifically, it consists of terms in function of the previous outputs (and possibly known functions of time), some multiplied with current or previous inputs. The inputs are allowed to be transformed using a known bijective transformation. All nonlinear functions are assumed known up to finite sets of basis functions, whose span contains the nonlinear function. To ease notation we will write $h(u(t))=u_\mathrm{h}(t)$. To be able to do data-driven output matching, the main requirement on the system is that its nonlinearity needs to be affine in time shifts of some bijective transformation of the input. At this point we only make one assumption on the nonlinear functions $\phi_{\mathrm{b}}$ and $\phi_{\mathrm{b,0}}$:
\begin{assumption}
    We have a finite set of basis functions such that both nonlinear functions $\phi_{\mathrm{b}}$ and $\phi_{\mathrm{b,0}}$ can be written as a linear combination of functions from this set.\label{ass1}
\end{assumption}

We don't make any assumptions regarding the properties of the functions themselves. In particular, they don't need to be differentiable or even continuous.

In some cases it is convenient to use a slightly altered definition of the nonlinear term compared to \cref{OGB_phi_nl}:
\begin{equation}
    \phi_{\mathrm{nl}}(x)=\phi_{\mathrm{b},0}+\begin{bmatrix}
    \phi_\mathrm{b}(x_\mathrm{y})\otimes x_{h(u)}\\\mathbf{0}_{k\times 1}
    \end{bmatrix},\label{OGB_phi_nl2}
\end{equation}
where $\phi_{\mathrm{b,0}}$ is re-scaled accordingly. This definition makes it easier to rewrite other model classes with nonlinear terms independent from the inputs into form \cref{system1}. Since the effect of adding these zero rows could also be obtained by adding some zero rows to $\phi_\mathrm{b}$ and $\phi_\mathrm{b,0}$, this alternative definition doesn't change anything functionally and only offers convenience.

Since this is a complicated model class, some examples are given to illustrate its use. Unless otherwise specified, the bijective input transformation is taken as the identity transformation $h(u)=u$.
\begin{itemize}
    \item \textit{LTI system}. Consider an double input single output LTI system with lag 2 given in input-output form:
    \begin{equation*}
        y(t)=\sum_{i=0}^{1}\theta_\mathrm{y}^\top(i)y(t-2+i)+\sum_{j=0}^2\theta_\mathrm{u}^\top(j)u(t-2+j).
    \end{equation*}
    This can be written into form \cref{system1} by taking $\theta_{\mathrm{lin}}=\begin{bmatrix}
    \theta_{\mathrm{y}}^\top & \theta_{\mathrm{u}}^\top
    \end{bmatrix}^\top$, $\theta_{\mathrm{nl}}=0$ and $\phi_{\mathrm{b,0}}(x_y)=\phi_{\mathrm{b}}(x_y)=0$. The next examples will build on this example, so to aid notation, denote the values corresponding to this example with superscript LTI.
    \item \textit{LTI system with nonlinear terms}. Consider a nonlinear system given by the sum of an LTI system and some nonlinear terms:
    \begin{equation*}
        y(t) = y^{\mathrm{LTI}}(t)-1.5\mathrm{sin}(y(t-1))+0.3y(t-2)^2u_2(t-1).
    \end{equation*}
    The linear part stays the same, so $\theta_{\mathrm{lin}}=\theta_{\mathrm{lin}}^{\mathrm{LTI}}$. Making use of \cref{OGB_phi_nl2}, we start off by looking at the nonlinear term that depends on the input. We have $\phi_\mathrm{b}(x_y)=y(t-2)^2$. The kronecker product makes it so that we get $n_\mathrm{u}(l+1)=6$ nonlinear terms from this one function, of which only one is of interest. The other terms will just get a zero value in the parameter matrix. We also have a term that doesn't depend on the input, such that $\phi_{\mathrm{b,0}}(x_y)=\begin{bmatrix}
    \textbf{0}_{6\times 1}^\top & \mathrm{sin}(y(t-1))
    \end{bmatrix}^\top$ and we end up with $n_\mathrm{b}=7$ nonlinear terms, for once written explicitly:
    \begin{align*}
        \phi_{\mathrm{nl}}=\big[&y(t-2)^2u_1(t-2),\  y(t-2)^2u_2(t-2),\\
        &y(t-2)^2u_1(t-1),\  y(t-2)^2u_2(t-1),\\
        &y(t-2)^2u_1(t),\  y(t-2)^2u_2(t),\  \mathrm{sin}(y(t-1))\big]^\top.
    \end{align*}
    With this it is easy to see that
    \begin{equation*}
        \theta_{\mathrm{nl}}=\begin{bmatrix}
        0 & 0 & 0 & 0.3 & 0 & 0 & -1.5
        \end{bmatrix}^\top.
    \end{equation*}
    \item \textit{LTI system with nonlinear term depending on other known signal}. Assume we have a signal $p(t)$ of which we know its values for all $t_0\in\mathbb{N}<t$. This can be due to knowing its generating function or having measured it. Consider a nonlinear system given by the sum of an LTI system and a signal-dependent term:
    \begin{equation*}
        y(t) = y^{\mathrm{LTI}}(t)+0.8(p(t-1)y(t-2))^3u_1(t).
    \end{equation*}
    The linear part stays the same, so $\theta_{\mathrm{lin}}=\theta_{\mathrm{lin}}^{\mathrm{LTI}}$. Being a signal, $p(t)$ can also be interpreted as a discrete function of time $p:\mathbb{Z}\rightarrow\mathbb{P}$ with $\mathbb{P}$ the subspace of possible parameter values. This discrete function of time can be used in places where an explicit time dependence is allowed, like the $\phi$ functions. Under our assumptions this function is known since we know the signal where needed. Similar to the previous example we obtain $\phi_{\mathrm{b}}(x_y)=(p(t-1)y(t-2))^3$ and $\phi_{\mathrm{b,0}}(x_y)=0$. We get
    \begin{equation*}
        \theta_{\mathrm{nl}}=\begin{bmatrix}
        0 & 0 & 0 & 0 & 0.8 & 0
        \end{bmatrix}^\top.
    \end{equation*}
    \item \textit{nonlinear system with bijective transformation of the input}. Consider a system given by
    \begin{equation*}
        \resizebox{\linewidth}{!}{$y(t)=0.5y(t-1)+0.3(u_1(t-1)+u_2(t-1)^2)+u_2(t-2)^3.$}
    \end{equation*}
    If we take $h(u(t))=u_h(t)=\begin{bmatrix}
    u_1(t)+u_2(t)^2 & u_2(t)^3
    \end{bmatrix}^\top$, we obtain following system:
    \begin{equation*}
        y(t)=0.5y(t-1)+0.3u_{h,1}(t-1)+u_{h,2}(t-2),
    \end{equation*}
    which is a linear system. It is easy to see that
    \begin{equation*}
        \theta_{\mathrm{lin}}=\begin{bmatrix}
        0 & 1 & 0 & 1 & 0.3 & 0 & 0 & 0
        \end{bmatrix}^\top.
    \end{equation*}
    We also have $\theta_{\mathrm{nl}}=0$ and $\phi_{\mathrm{b,0}}(x_y)=\phi_{\mathrm{b}}(x_y)=0$. The inverse of $h(u)$ is given by $u=\begin{bmatrix}
    u_{h,1}-u_{h,2}^{2/3} & u_{h,2}^{1/3}
    \end{bmatrix}$, which shows that this is indeed a bijective transformation.
\end{itemize}

A few classes of nonlinear systems are easily shown to be included in this model class. Affine systems can be included by taking $\phi_{\mathrm{b,0}}(x_y)=1$. The subclass of Hammerstein systems with bijective input transformation is easily included by taking $h$ as this bijective transformation.
\begin{note}
In theory, the bijective mapping $h(u(t))$ is allowed to depend on previous inputs as if they were known constants. This is allowed since at the time we need to invert the transformation of $u_\mathrm{h}(t)$ we already know the inputs at the time instances before it, which makes them known constants. The only place this could make things difficult is in a control setting with input constraints, since transforming these constraints to $u_\mathrm{h}$ from $u$ becomes non-trivial. Allowing for this dependence on previous time samples also makes it so that we need to know more inputs in the past compared to when this is not the case.
\end{note}

Similar to \cite{markovsky_data-driven_2023}, we start by defining an additional input $u_{\mathrm{nl}}=\phi_{\mathrm{b},0}(x_y)+\phi_b(x_y)\otimes x_{h(u)}$ such that 
\begin{equation}
    \mathcal{B}_{\mathrm{ext}}:=\left\{w_{\mathrm{ext}}=\begin{bmatrix}w\\u_{\mathrm{nl}}\end{bmatrix} \ \vline \ y=\theta_{\mathrm{lin}}^\top\phi_{\mathrm{lin}}(x_w)+\theta_{\mathrm{nl}}^\top x_{\mathrm{nl}}\right\},\label{OGB_general}
\end{equation}
where 
\begin{align*}
    x_{\mathrm{nl}}&=\begin{bmatrix}\sigma^{-l} u_{\mathrm{nl}}^\top & \sigma^{-l+1} u_{\mathrm{nl}}^\top & \hdots & u_{\mathrm{nl}}^\top\end{bmatrix}^\top\\
    \theta_{\mathrm{nl}}&\in\mathbb{R}^{(l+1)n_{\mathrm{nl}}\times n_\mathrm{y}}
\end{align*}
is an LTI system with $n_u+n_{\mathrm{nl}}$ inputs. To formulate the main result of this section we need to define some notation. Define following partitioning of the extended trajectory block Hankel matrix:

\begin{equation*}
    \mathcal{H}_{L+T_{\mathrm{ini}}}(w_{\mathrm{ext}})=\begin{bmatrix}\mathcal{H}_{L+T_{\mathrm{ini}}}(u_\mathrm{h})\\\mathcal{H}_{L+T_{\mathrm{ini}}}(y)\\\mathcal{H}_{L+T_{\mathrm{ini}}}(u_{\mathrm{nl}})\end{bmatrix}=\begin{bmatrix}U_\mathrm{h}\\Y\\U_{\mathrm{nl}}\end{bmatrix}.
\end{equation*}
For ease of notation, these matrices will be further partitioned into past and future, which gives by making use of \Cref{dd_rep}:
\begin{equation}
    \mathcal{H}_{L+T_{\mathrm{ini}}}(w_{\mathrm{ext}})g=\begin{bmatrix}U_\mathrm{h}\\Y\\U_{\mathrm{nl}}\end{bmatrix}g=\begin{bmatrix}U_{\mathrm{hp}}\\U_{\mathrm{hf}}\\Y_p\\Y_f\\U_{\mathrm{nl}}\end{bmatrix}g=\begin{bmatrix}u_{\mathrm{h,ini}}\\u_\mathrm{h}\\y_{\mathrm{ini}}\\y_{\mathrm{f}}\\u_{\mathrm{nl}}\end{bmatrix}.\nonumber
\end{equation}

Another useful formula is one that allows us to write $u_{\mathrm{nl}}$ linearly in function of the inputs:
\begin{equation}
    u_{\mathrm{nl}}=\phi_{\mathrm{b},0}+\Phi_{\mathrm{b},p}(x_y)u_{\mathrm{h,ini}}+\Phi_{\mathrm{b},f}(x_y)u_\mathrm{h}.\label{unl_lin}
\end{equation}
The constructive proof of this formula is similar to the one of \cite[Lemma 6]{markovsky_data-driven_2023}.
\begin{lemma}[Matrix representation of \cref{OGB_phi_nl}]
For an output-generalized bilinear system $\mathcal{B}$ and a finite signal $w\in(\mathbb{R}^q)^T$ with $T>\mathbf{l}(\mathcal{B})$, it holds that $\phi_{\mathrm{b,0}}=\phi_{\mathrm{nl}}(x_\mathrm{y},\mathbf{0})\in\mathbb{R}^{n_{\mathrm{nl}}n_\mathrm{u}(T-\mathbf{l})}$ and $\phi_{\mathrm{b}}(x_\mathrm{y})\otimes x_{h(u)}=\Phi_\mathrm{b} u_h$, where $\Phi_\mathrm{b}\in\mathbb{R}^{n_{\mathrm{nl}}n_\mathrm{u}(T-\mathbf{l})\times n_\mathrm{u}T}$ is given by
\begin{equation}
    \resizebox{\linewidth}{!}{$
    \Phi_\mathrm{b}(x_y)=\begin{bmatrix}
    \phi_\mathrm{nl}\left(x_y,x(\delta)\right)-\phi_{\mathrm{b,0}}(x_y) & \cdots & \phi_\mathrm{nl}\left(x_y,x(\sigma^{n_uT-1}\delta)\right)-\phi_{\mathrm{b,0}}(x_y)
    \end{bmatrix}$}\label{Phib}
\end{equation}
with $\delta$ the unit pulse $\left(1,0,\cdots,0\right)\in\mathbb{R}^{n_uT}$ and $x(\delta)$ being $x_{h(u)}$ with all $h(u)$ replaced by $\delta$. With some abuse of notation we interpret $\delta\in\mathbb{R}^{n_uT}$ as $\delta\in(\mathbb{R}^{n_u})^{T}$, allowing it to take the place of $h(u)$ where needed.
\end{lemma}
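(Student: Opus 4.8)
The plan is to establish the two claims separately: the first is a direct substitution, and the second reduces to the elementary fact that a linear map is recovered from its action on the canonical basis, with the unit pulses $\sigma^{k}\delta$ playing the role of that basis.

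For the first claim I would substitute $x_{h(u)}=\mathbf{0}$ into the definition \cref{OGB_phi_nl} of $\phi_{\mathrm{nl}}$. Since the Kronecker product $\phi_\mathrm{b}(x_y)\otimes x_{h(u)}$ is linear (hence identically zero) in its right factor, setting $x_{h(u)}=\mathbf{0}$ annihilates the bilinear term and leaves exactly $\phi_{\mathrm{b,0}}(x_y)$. Stacking this evaluation over the $T-\mathbf{l}$ time instants at which $\phi_{\mathrm{nl}}$ is well defined, namely $t=\mathbf{l}+1,\dots,T$ (those for which the full lag window $y(t-\mathbf{l}),\dots,y(t-1)$ and $h(u)(t-\mathbf{l}),\dots,h(u)(t)$ lie inside the trajectory), yields both the identity $\phi_{\mathrm{b,0}}=\phi_{\mathrm{nl}}(x_y,\mathbf{0})$ and the stated dimension.

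For the second claim the key observation is that, with the output part of $w$ (and therefore $x_y$ and $\phi_\mathrm{b}(x_y)$) held fixed, the stacked map $u_h\mapsto\phi_\mathrm{b}(x_y)\otimes x_{h(u)}$ is linear: the passage $u_h\mapsto x_{h(u)}=x(u_h)$ only selects and time-shifts blocks of $u_h$, which is a linear operation, and $\phi_\mathrm{b}(x_y)\otimes(\cdot)$ is linear in its right factor. I would therefore expand $u_h=\sum_{k=0}^{n_uT-1}(u_h)_{k+1}\,\sigma^{k}\delta$ over the canonical unit pulses and push the sum through $x(\cdot)$ and the Kronecker product, obtaining $\phi_\mathrm{b}(x_y)\otimes x_{h(u)}=\sum_{k=0}^{n_uT-1}(u_h)_{k+1}\,\bigl(\phi_\mathrm{b}(x_y)\otimes x(\sigma^{k}\delta)\bigr)$. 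By \cref{OGB_phi_nl} together with the first claim, each term $\phi_\mathrm{b}(x_y)\otimes x(\sigma^{k}\delta)$ equals $\phi_{\mathrm{nl}}(x_y,x(\sigma^{k}\delta))-\phi_{\mathrm{b,0}}(x_y)$, which is precisely the $(k+1)$-th column of $\Phi_\mathrm{b}$ in \cref{Phib}. Collecting the columns then gives $\phi_\mathrm{b}(x_y)\otimes x_{h(u)}=\Phi_\mathrm{b}u_h$, as desired; this is the same column-extraction device used for \cite[Lemma 6]{markovsky_data-driven_2023}.

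The main obstacle I anticipate is bookkeeping rather than conceptual depth. I must make the stacking over valid time instants precise, check that $k$ ranging over $0,\dots,n_uT-1$ enumerates exactly the scalar entries of $u_h\in\mathbb{R}^{n_uT}$, and confirm that stacking the per-instant outputs reproduces the row dimension $n_{\mathrm{nl}}n_\mathrm{u}(T-\mathbf{l})$ claimed for $\Phi_\mathrm{b}$. Care is also needed because $x_{h(u)}$ at time $t$ reads only a length-$(l+1)$ window of $u_h$, so most entries of $x(\sigma^{k}\delta)$ vanish and many columns of $\Phi_\mathrm{b}$ are structurally sparse; I would remark on this banded selection structure but it does not affect the identity. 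Once the dimension conventions of \cref{OGB_general} are pinned down, I expect the argument to go through routinely, relying only on the linearity of $u_h\mapsto x(u_h)$ and the bilinearity of $\otimes$.
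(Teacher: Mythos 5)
Your proposal is correct and follows essentially the same route as the paper's proof: setting the transformed input to zero isolates $\phi_{\mathrm{b,0}}$, and the columns of $\Phi_\mathrm{b}$ are extracted by evaluating the map at the unit pulses $\sigma^{k}\delta$, using linearity of $u_h\mapsto x_{h(u)}$ and of the Kronecker product in its right factor together with the identity $\phi_{\mathrm{b}}(x_y)\otimes x_{h(u)}=\phi_\mathrm{nl}(x_y,x_{h(u)})-\phi_\mathrm{b,0}(x_y)$. Your write-up is in fact somewhat more explicit than the paper's (the canonical-basis expansion and the dimension bookkeeping are spelled out rather than asserted), but there is no substantive difference in approach.
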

\begin{proof}
It is evident to see that taking all $u_h=0$ results in $\phi_\mathrm{nl}(x_y, \mathbf{0})=\phi_\mathrm{b,0}(x_y)$, since that is the only term remaining in that case. Taking a look at $\phi_{\mathrm{b}}(x_y)\otimes x_{h(u)}$, we see that this term is linear in $x_{h(u)}$ since each row is just a linear combination of the transformed inputs. We can extract the coefficients of each input for each row by setting that specific input to one and the rest to zero. This means that the input element at index $k$ has to be replaced by $\sigma^{k-1}\delta$. We find \cref{Phib} by noticing that $\phi_{\mathrm{b}}(x_y)\otimes x_{h(u)}=\phi_\mathrm{nl}(x_y,x_{h(u)})-\phi_\mathrm{b,0}(x_y)$.
\end{proof}

We now have the following theorem:
\begin{theorem}[data-driven output matching 1]
Consider an output-generalized bilinear system $\mathcal{B}$, data $w_{\mathrm{d}}\in\mathcal{B}|_T$ and initial trajectory $w_{\mathrm{ini}}=\begin{bmatrix}
u_{h,\mathrm{ini}}^\top & y_{\mathrm{ini}}^\top
\end{bmatrix}^\top\in(\mathbb{R}^{n_{\mathrm{u}}+n_{\mathrm{y}}})^{T_{\mathrm{ini}}}$. Under the assumption that \begin{equation}
    \rank\mathcal{H}_{L+T_{\mathrm{ini}}}(w_{\mathrm{d,ext}})=(n_\mathrm{u}+n_{\mathrm{nl}})(L+T_{\mathrm{ini}})+n,\label{poe1}
\end{equation} with $n$ the order of the system, and \Cref{ass1}, a solution to \Cref{prob1} is given by
\begin{equation}
    u=h^{-1}\left(U_{\mathrm{hf}}\mathcal{A}^\dagger(w_{\mathrm{d}})\begin{bmatrix}w_{\mathrm{ini}}\\y_\mathrm{r}\\\Phi_{\mathrm{b,p}}(x_y)u_{\mathrm{h,ini}}+\phi_{\mathrm{b,0}}\end{bmatrix}\right),\label{sol1}
\end{equation}
where \begin{equation}
    \mathcal{A}(w_{\mathrm{d}})=\begin{bmatrix}U_{\mathrm{hp}}\\Y\\U_{\mathrm{nl}}-\Phi_{\mathrm{b,f}}(x_y)U_{\mathrm{hf}}\end{bmatrix},\nonumber
\end{equation}
and $\cdot^\dagger$ indicates the pseudo-inverse. Furthermore, the solution set of $u_\mathrm{h}$ can be parameterized using $\rank U_{\mathrm{hf}}N$ parameters according to \Cref{param}, with $N$ a basis for the null space of $\mathcal{A}(w_\mathrm{d})$.\label{theo1}
\end{theorem}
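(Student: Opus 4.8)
The plan is to exploit the LTI embedding of \cref{OGB_general}: the extended behavior $\mathcal{B}_{\mathrm{ext}}$ is linear time-invariant with $n_\mathrm{u}+n_{\mathrm{nl}}$ inputs and order $n$, so that the rank condition \cref{poe1} is precisely the hypothesis \cref{dd_cond} of \Cref{dd_rep} applied to $\mathcal{B}_{\mathrm{ext}}$ (with $\mathbf{m}(\mathcal{B}_{\mathrm{ext}})=n_\mathrm{u}+n_{\mathrm{nl}}$, $\mathbf{n}(\mathcal{B}_{\mathrm{ext}})=n$, and window length $L+T_{\mathrm{ini}}\geq\mathbf{l}(\mathcal{B}_{\mathrm{ext}})$). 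Hence every $g$ produces an extended trajectory $\mathcal{H}_{L+T_{\mathrm{ini}}}(w_{\mathrm{d,ext}})g\in\mathcal{B}_{\mathrm{ext}}|_{L+T_{\mathrm{ini}}}$ and, conversely, every such trajectory is of this form. Solving \Cref{prob1} for $\mathcal{B}$ then reduces to selecting a $g$ whose extended trajectory reproduces the initial data $u_{h,\mathrm{ini}},y_{\mathrm{ini}}$, matches the reference $y_\mathrm{f}=y_\mathrm{r}$, and is \emph{consistent}, meaning that its $u_{\mathrm{nl}}$ block equals the genuine nonlinear signal $\phi_{\mathrm{nl}}$ rather than an arbitrary free input.

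First I would read off the initial-condition and matching constraints directly from the Hankel partition as $U_{\mathrm{hp}}g=u_{h,\mathrm{ini}}$, $Y_pg=y_{\mathrm{ini}}$, and $Y_fg=y_\mathrm{r}$, leaving the future transformed input $u_\mathrm{h}=U_{\mathrm{hf}}g$ free. For the consistency constraint I would substitute the linear representation \cref{unl_lin} of $u_{\mathrm{nl}}$ (whose coefficient blocks $\Phi_{\mathrm{b,p}},\Phi_{\mathrm{b,f}}$ and offset come from \cref{Phib}) into $U_{\mathrm{nl}}g=u_{\mathrm{nl}}$ and replace $u_\mathrm{h}$ by $U_{\mathrm{hf}}g$, obtaining $\left(U_{\mathrm{nl}}-\Phi_{\mathrm{b,f}}(x_y)U_{\mathrm{hf}}\right)g=\phi_{\mathrm{b},0}+\Phi_{\mathrm{b,p}}(x_y)u_{h,\mathrm{ini}}$. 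Stacking these four identities yields exactly $\mathcal{A}(w_\mathrm{d})g=b$ with $b=\bigl[w_{\mathrm{ini}}^\top\ y_\mathrm{r}^\top\ (\Phi_{\mathrm{b,p}}(x_y)u_{h,\mathrm{ini}}+\phi_{\mathrm{b},0})^\top\bigr]^\top$, i.e. the right-hand side appearing in \cref{sol1}.

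The key step, and the one I expect to be the main obstacle to state cleanly, is arguing that $b$ is a \emph{known} vector. The matrices $\Phi_{\mathrm{b,p}}(x_y),\Phi_{\mathrm{b,f}}(x_y)$ and the offset $\phi_{\mathrm{b},0}$ depend on the sought trajectory only through the output regressor $x_y$, that is, through lagged outputs; but in output matching every output entering $x_y$ is prescribed, being a component of either $y_{\mathrm{ini}}$ or the reference $y_\mathrm{r}$. Under \Cref{ass1} these quantities are computable from the known basis functions, so \cref{unl_lin} is genuinely linear in the inputs with known coefficients and $\mathcal{A}(w_\mathrm{d})g=b$ is an ordinary linear system in $g$. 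Whenever \Cref{prob1} admits a solution the system is consistent, hence $b\in\image\mathcal{A}(w_\mathrm{d})$ and $g^\star=\mathcal{A}^\dagger(w_\mathrm{d})b$ is an exact (minimum-norm) solution; setting $u_\mathrm{h}=U_{\mathrm{hf}}g^\star$ and inverting the bijection as $u=h^{-1}(u_\mathrm{h})$ gives \cref{sol1}, and by construction $w_{\mathrm{ini}}\wedge(u,y_\mathrm{r})\in\mathcal{B}|_{T_{\mathrm{ini}}+L}$.

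Finally, for the parameterization I would write the full solution set as $g=g^\star+N\alpha$, with $N$ a basis of $\nul\mathcal{A}(w_\mathrm{d})$ and $\alpha$ free. Because the outputs stay fixed at $y_{\mathrm{ini}}$ and $y_\mathrm{r}$, the coefficient blocks in $b$ are unchanged as $\alpha$ varies, so every such $g$ remains admissible. The induced set of admissible transformed inputs is the affine set $u_\mathrm{h}=U_{\mathrm{hf}}g^\star+U_{\mathrm{hf}}N\alpha$, and two parameters yield the same $u_\mathrm{h}$ exactly when their difference lies in $\nul(U_{\mathrm{hf}}N)$; thus the dimension of this set, and hence the minimal number of free parameters, equals $\rank U_{\mathrm{hf}}N$, which is the parameterization asserted via \Cref{param}.
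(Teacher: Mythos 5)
Your proposal is correct and takes essentially the same route as the paper's own proof: invoke \Cref{dd_rep} for the extended LTI behavior under rank condition \cref{poe1}, substitute \cref{unl_lin} with $u_\mathrm{h}=U_{\mathrm{hf}}g$ to obtain $\mathcal{A}(w_\mathrm{d})g=b$, solve via the pseudo-inverse, and obtain the parameterization from \Cref{param}. Your explicit justification that $b$ is a known vector---because every output entering $x_y$ is prescribed by $y_{\mathrm{ini}}$ or $y_\mathrm{r}$, so \cref{unl_lin} has computable coefficients under \Cref{ass1}---is the key point the paper leaves implicit, and stating it only strengthens the argument.
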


\begin{proof}
Using \Cref{ass1} we can use the finite set of basis functions instead of $\phi_\mathrm{b}$ and $\phi_{\mathrm{b,0}}$ themselves, which we don't assume to know. This allows us to construct $U_\mathrm{nl}$ and $\Phi_\mathrm{b}$. Under the rank condition \cref{poe1} we have, according to \Cref{dd_rep}, that there exists a $g$ such that \begin{equation}
    \mathcal{H}_{L+T_{\mathrm{ini}}}(w_{\mathrm{d,ext}})g=\begin{bmatrix}U_{\mathrm{hp}}\\U_{\mathrm{hf}}\\Y_p\\Y_f\\U_{\mathrm{nl}}\end{bmatrix}g=\begin{bmatrix}u_{\mathrm{h,ini}}\\u_\mathrm{h}\\y_{\mathrm{ini}}\\y_{\mathrm{r}}\\u_{\mathrm{nl}}\end{bmatrix}.\label{eq}
\end{equation}

Plugging the equation $u_\mathrm{h}=U_{\mathrm{hf}}g$ into \cref{unl_lin}, and plugging in the resulting expression for $u_{\mathrm{nl}}$ in \cref{eq}, we can move the term depending on $g$ to the other side, which gives, putting aside the equations we don't know the solutions of:
\begin{equation}
     \begin{bmatrix}U_{\mathrm{hp}}\\Y\\U_{\mathrm{nl}}-\Phi_{\mathrm{b},f}(x_y)U_{\mathrm{hf}}\end{bmatrix}g=\mathcal{A}(w_{\mathrm{d}})g=\begin{bmatrix}w_{\mathrm{ini}}\\y_\mathrm{r}\\\Phi_{\mathrm{b},p}(x_y)u_{\mathrm{h,ini}}+\phi_{\mathrm{b},0}\end{bmatrix}.\nonumber
\end{equation}

Solving this equation for $g$ by making use of the pseudo-inverse $\mathcal{A}^\dagger$, and by making use of the equations we put aside we obtain \cref{sol1}. The parameterization result follows directly from the constructive proof of \Cref{param}.

\end{proof}


With this theorem it is shown that it is possible to solve the output matching problem for this specific class of nonlinear systems. Furthermore, instead of just giving a solution it offers a way to obtain a parameterization of the complete solution set. Sadly, it seems to be difficult to extrapolate this result to a convex solution of the reference tracking control problem, since this method relies on the fact that the output is exactly known in advance. The solved optimization problem depends on the system output in a generally non-convex way, which makes solving the reference tracking control problem a non-convex problem in general. Finding a convex reformulation or testing the feasibility of non-convex solving methods is left for future work.

An important thing to note regarding the additional inputs is that some care has to be taken regarding the basis functions. Since the output of an LTI system with lag $l$ depends not only on the current input, but in general also the previous $l$ inputs, we need to avoid having basis functions that are time-shifted versions of other basis functions. If this is not the case we will be unable to fulfill the generalized persistency of excitation condition \cref{poe1}. To fulfill this condition we need independent inputs, including the additional inputs, which is not the case if one additional input is just a shifted version of another one. These time-shifted basis functions are called redundant since they don't add new information and can be removed. Since they can just be removed, it should be noted that keeping them in, although the rank condition \cref{poe1} is not fulfilled, will still give the correct solution assuming removing them fulfills all conditions. A possible question that can be raised is what happens in other cases when rank condition \cref{poe1} is not fulfilled. If the rank is higher than required by the condition this often means that either the assumed rank is too low or \Cref{ass1} is not fulfilled. In the first case the method will work as intended assuming we have enough data according to the real order. In the second case the obtained solution will not be correct. How bad the result will be depends on how well the actual nonlinearity can be approximated by linear combinations of the basis functions. This shows the need for a correct choice of basis. If the rank is too low due to a lack of data limiting the number of columns of the matrix the story is a bit more complex. It is in most cases possible to obtain a solution that reproduces the reference output exactly using slightly less data than needed to fulfill the rank condition. The rank condition is needed for exact reproduction of the entire extended trajectory, but the output part of the trajectory might not depend on the entirety of the extended input trajectory. The terms the output doesn't depend on do not have to be reproduced exactly, meaning the system of equations needs less degrees of freedom to solve the output matching problem. Knowing how many extra samples one can do without requires knowledge about the terms that influence the output, which we don't have in general. An example regarding this will be given in a later section.

\section{Linear Parameter-Varying Systems}\label{sec4}
For this section we limit ourselves to the case of discrete-time linear parameter-varying systems with affine dependence on the parameters. Furthermore we assume the parameter values at each time instance are known. In that case the system can be written in the following form \cite{verhoek_fundamental_2021}:
\begin{equation}
    y(t)+\sum_{i=1}^{n_a}a_i(p(t-i))y(t-i)=\sum_{i=0}^{n_b}b_i(p(t-i))u(t-i)\label{system2}
\end{equation}
\begin{align*}
    a_i(p(t-i))&=\sum_{j=1}^{n_p}a_{i,j}p_j(t-i),\quad a_{i,j}\in\mathbb{R}^{n_y\times n_y}\\
    b_i(p(t-i))&=\sum_{j=1}^{n_p}b_{i,j}p_j(t-i),\quad b_{i,j}\in\mathbb{R}^{n_y\times n_u}
\end{align*}

This class of systems is already studied in the literature, for example in \cite{verhoek_fundamental_2021} and \cite{verhoek_data-driven_2021}. Knowledge about the parameter values over a prediction horizon is not always available. \cite{verhoek_data-driven_2021} offers some options and references to deal with this.

We now claim that discrete-time LPV systems with known parameters and affine dependence on those parameters are a specific case of output-generalized bilinear systems. Since $p(t)$ is a known signal we are in a similar situation as the \textit{LTI system with nonlinear term depending on other known signal} example. Ignoring the constant multiplication factor the product terms are of the form $p(t-i)\otimes y(t-i)$ and $p(t-i)\otimes u(t-i)$. Rewriting \cref{system2} into the form of \cref{system1} is possible by using definition \cref{OGB_phi_nl2} and taking
\begin{equation*}
    \phi_\mathrm{b}=\begin{bmatrix}
    p(t)^\top & \cdots & p(t-n_b)^\top
    \end{bmatrix}^\top,
\end{equation*}
\begin{equation*}
    \phi_\mathrm{b,0}=\begin{bmatrix}
    \mathbf{0}_{n_pn_u(n_b+1)\times 1} \\ p(t-1)\otimes y(t-1)\\ \vdots \\ p(t-n_a)\otimes y(t-n_a)
    \end{bmatrix}^\top,
\end{equation*}
taking $\theta_{\mathrm{lin}}=\mathbf{0}$ and
\begin{align*}
    \theta_\mathrm{nl}(i)=\big[
    &b_{0,1}(i, 1) \quad \cdots \quad b_{0,1}(i, n_u) \quad b_{0,2}(i, 1) \quad \\
    &\cdots \quad b_{0,n_p}(i, n_u) \quad b_{1,1}(i, 1)\quad\cdots \quad b_{n_b, n_p}(i, n_u)\\
    &-a_{1,1}(i, 1) \quad \cdots \quad -a_{1,1}(i, n_y) \quad -a_{1,2}(i, 1) \quad \cdots \quad \\
    &-a_{1,n_p}(i, n_y) \quad -a_{2,1}(i, 1)\quad\cdots \quad -a_{n_a, n_p}(i, n_y)
    \big]^\top_,
\end{align*}
with $\theta_\mathrm{nl}(i)$ a column vector with $n_p(n_u(n_b+1)+n_yn_a)$ rows and $\theta_\mathrm{nl}=\begin{bmatrix}
\theta_\mathrm{nl}(1) & \cdots & \theta_\mathrm{nl}(n_y)
\end{bmatrix}^\top$.
This shows that discrete-time LPV systems with known parameter values and affine dependence on these parameters are indeed output-generalized bilinear systems, allowing the results of the previous section to be applied. By taking $u_{\mathrm{nl}}(t)=\begin{bmatrix}
(p(t)\otimes u(t))^\top & (p(t)\otimes y(t))^\top\end{bmatrix}^\top$, where the redundant additional inputs got removed compared to the above result, we obtain the same system of equations in our method as found in \cite{verhoek_data-driven_2021} up to a permutation of the rows.

\section{Simulation examples}\label{sec5}
This section discusses the results obtained by simulating two nonlinear systems obtained by discretizing differential equations. For each system a data trajectory $w_{\mathrm{d}}$ of length $T$ is generated by applying a random input signal taken from a given distribution. This data trajectory is used to create the matrices needed by the method. Since we created the systems ourselves we know their properties, including the lag and order, which we use to verify if the rank condition \cref{poe1} is fulfilled. The quality of the result is quantified by applying the estimated input obtained by \Cref{theo1} and comparing the realized output obtained by applying the estimated input to the system with the reference output. The quantification is done using the realized relative mean-squared error (RRMSE) \begin{equation}
    \mathrm{RRMSE}(y_{\mathrm{realised}})=\frac{\norm{y_{\mathrm{realized}}-y_\mathrm{r}}_2}{\norm{y_\mathrm{r}}_2}.
\end{equation}

\subsection{Simulation example 1: Rotational pendulum}
This system is a discretized version of the rotational pendulum taken from \cite{verhoek_data-driven_2021}, where they show that this system can be rewritten as an LPV system. Here we will instead treat it like an output-generalized bilinear system. In continuous form the system is given by \begin{equation*}
    \frac{d^2 y}{dt^2}(t)=-\frac{mgl}{J}\text{sin}(y(t))-\frac{1}{\tau}\frac{dy}{dt}(t)+\frac{K_m}{\tau}u(t).
\end{equation*}
Here $y$ represents the angle of the pendulum. This system is discretized using a first order Euler approximation of the derivatives with sample time $T_s$. This gives a system of the form \begin{align}
    y(t)&=2y(t-1)-y(t-2)-\frac{T_s^2mlg}{J}\text{sin}(y(t-2))\nonumber\\
    &\quad-\frac{T_s}{\tau}(y(t-1)-y(t-2))+\frac{T_s^2K_m}{\tau}u(t-2)
\end{align}

It is clear to see that this system is of the form \cref{OGB_general} with $u_{\mathrm{nl}}=\text{sin}(y(t-2))$. We start off by generating a data trajectory $w_{\mathrm{d}}$ of the system using an input $u_{\mathrm{d}}\in\mathbb{R}^{307}$ with iid elements taken uniformly from the interval $]0,0.08[$. The needed matrices are calculated taking $L=100$ and $T_{\mathrm{ini}}=\textbf{n}(\mathcal{B}_{LTI})=2$. It is confirmed that the matrices fulfill rank condition \cref{poe1}. A reference trajectory is generated by starting the pendulum on $y=\pi/2$. We use inputs $u\in\mathbb{R}^{102}$ with the first two inputs zero and the rest iid taken uniformly from the interval $]0,0.05[$. The input estimation is done by applying \Cref{theo1} once. Its results can be seen in \Cref{fig:sim_ex2}.
\begin{figure}[ht]
\centering
\includegraphics[width=\linewidth]{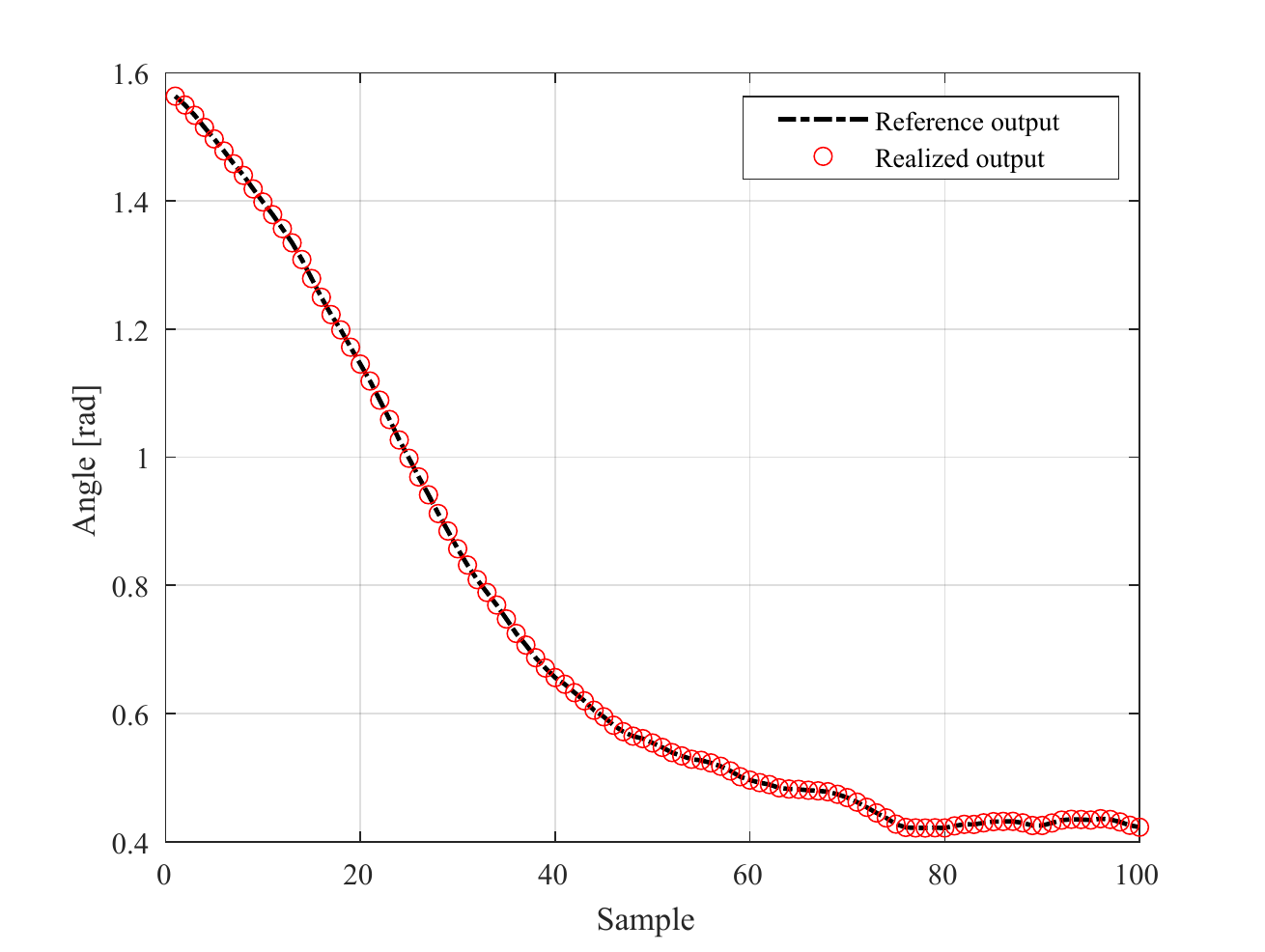}
\caption{Comparison between the reference output and the output realized by applying the estimated input for the rotational pendulum. Both are the same up to the machine precision.}
\label{fig:sim_ex2}
\end{figure}

If not enough data is available for this big a window it is also possible to use \Cref{theo1} using a smaller sliding window and applying the theorem multiple times for the different windows. We obtain a RRMSE of $4.806\cdot 10^{-14}$, which is around the machine precision. From \Cref{fig:sim_ex2} it can be seen that the output matching problem is perfectly solved. In this case the solution set can be parameterized using 2 parameters. As a general note, although not shown in this noiseless result, the noise performance of \Cref{theo1} is observed to be worse compared to the LTI case. A possible reason for this is that by applying nonlinear functions to the noisy data the implicit assumption that the noise is iid white Gaussian is no longer reasonable in most cases. Further exploring the influence of noise is left for future work.




\subsection{Simulation example 2: Four-tank system}
This example is taken from \cite{gatzke_model_2000}. It consists of four tanks with two tanks each flowing into the one below. Two pumps, one for each top tank, pump water from the bottom basin to the tank. We take the water level in each tank as the outputs and the pump speeds as the inputs. Discretized using a first order Euler approximation of the derivative with sample time $T_s$ we obtain following system:
\begin{align*}
  &y_1(t)=y_1(t-1)-T_s\Big(\frac{a_1}{A_1}\sqrt{2gy_1(t-1)}+\frac{a_3}{A_1}\sqrt{2gy_3(t-1)}\\
    &\quad+\frac{\gamma_1k_1}{A_1}u_1(t-1)\Big)\\
    &y_2(t)=y_2(t-1)-T_s\Big(\frac{a_2}{A_2}\sqrt{2gy_2(t-1)}+\frac{a_4}{A_2}\sqrt{2gy_4(t-1)}\\
    &\quad+\frac{\gamma_2k_2}{A_2}u_2(t-1)\Big)\\
    &\resizebox{\linewidth}{!}{$y_3(t)=y_3(t-1)-T_s\Big(\frac{a_3}{A_3}\sqrt{2gy_3(t-1)}+\frac{(1-\gamma_2)k_2}{A_3}u_2(t-1)\Big)$}\\
    &\resizebox{\linewidth}{!}{$y_4(t)=y_4(t-1)-T_s\Big(\frac{a_4}{A_4}\sqrt{2gy_4(t-1)}+\frac{(1-\gamma_1)k_1}{A_4}u_1(t-1)\Big).$}
\end{align*}
Here we take $u_{\mathrm{nl}}(t)=\sqrt{y(t)}$, applied to each element, which leads to 4 additional inputs. The parameter values used in the simulation are $a_i=2.3\mathrm{cm}^2$, $A_i=730\ \mathrm{cm}^2$, $k_1=5.51\ \mathrm{cm}^3/\mathrm{s}$, $k_2=6.58\ \mathrm{cm}^3/\mathrm{s}$, $g=981\ \mathrm{cm}/\mathrm{s}^2$, $\gamma_1=0.333$ and $\gamma_2=0.307$. We use inputs $u\in(\mathbb{R}^2)^{1410}$ with the first two inputs zero and the rest iid taken uniformly from the interval $]0,0.05[$. The input estimation is done by applying \Cref{theo1} once. We take $L=200$ and $T_\mathrm{ini}=2$. Taking a data trajectory length of 1410 is done because this is the minimal amount of data needed to fulfill rank condition \cref{poe1}. To illustrate the fact that you can get perfect reconstruction with a bit less in most cases we calculated the RRMSE value using different lengths of data. Looking at the system definition, it is expected that we can do with 6 samples less, corresponding to $u(L)$ and $u_{\mathrm{nl}}(L)$. The result can be seen in \Cref{fig:lack_of_data2}. 
\begin{figure}[ht]
\centering
\includegraphics[width=\linewidth]{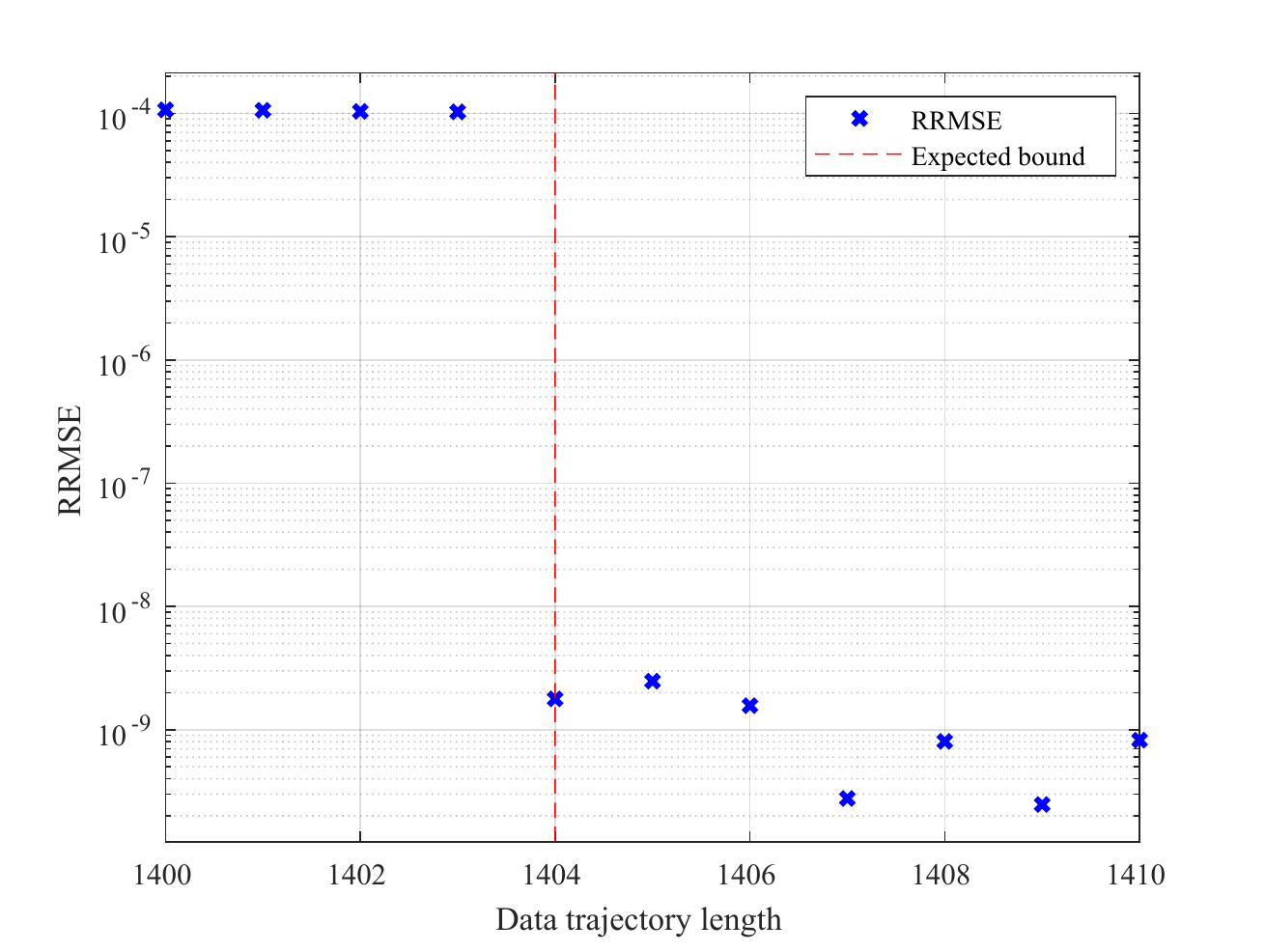}
\caption{Logarithmic plot of the RRMSE for different data trajectory lengths. Perfect reconstruction is predicted to last until a length of $1410-6=1404$.}
\label{fig:lack_of_data2}
\end{figure}

Note that if we do structure estimation by using the left null space of $\mathcal{H}_{\mathbf{l}+1}(w_{\mathrm{d,ext}})$ we need a trajectory of length $(n_\mathrm{u}+n_{\mathrm{nl}})(\mathbf{l}+1)+\mathbf{n}+\mathbf{l}=17$, which can be used to determine which terms influence the output. This can be used to remove unused basis functions and determine how many extra samples we can do without compared to those needed to fulfill the rank condition, allowing us to use the minimal amount of data.

\section{General discussion}\label{sec6}
\subsection{Non-uniqueness of the found solution}
\Cref{theo1} offers a way to obtain a solution to \Cref{prob1}, if one exists. In contrast to the simulation problem, however, this solution is not unique in general. One reason for this is due to the fact that reversing the input-output relationship does not guarantee that the resulting system is causal. The resulting system is non-causal in the case where the current output doesn't depend on the current input, but only on previous inputs. This non-uniqueness of the solution is not a problem, since this allows us to choose the most suitable option based on some criterion. To facilitate this, we will formulate a way to determine the minimal number of parameters needed for and how to construct a minimal parameterization of the solution set.

\begin{theorem}[Minimal parameterization of the solution set]
Consider the system of equations $Ag=b$ with solution set $\mathcal{G}$ and a matrix $U$ with the same number of columns as $A$. If $\mathcal{G}$ is non-empty, the set $\mathcal{U}=\{Ug\ |\ g\in\mathcal{G}\}$ can be parameterized using at least $\rank UN$ parameters, with $N$ a basis for the null space of $A$.\label{param}
\end{theorem}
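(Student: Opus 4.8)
The plan is to exploit the affine structure of the solution set $\mathcal{G}$ and to track how it is transported by $U$. First I would fix a particular solution $g_0\in\mathcal{G}$, which exists because $\mathcal{G}$ is assumed non-empty. By the standard characterization of the solutions of a linear system, every element of $\mathcal{G}$ differs from $g_0$ by an element of the null space of $A$, so $\mathcal{G}=g_0+\nul A$. Writing $N$ for the matrix whose columns form the chosen basis of $\nul A$, this becomes $\mathcal{G}=\{g_0+Nz \mid z\in\mathbb{R}^{k}\}$, where $k$ is the number of columns of $N$, and the map $z\mapsto g_0+Nz$ is a bijection onto $\mathcal{G}$.

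Next I would apply $U$ and use linearity: for $g=g_0+Nz$ we have $Ug=Ug_0+UNz$, hence $\mathcal{U}=\{Ug_0+UNz \mid z\in\mathbb{R}^{k}\}=Ug_0+\image(UN)$. Thus $\mathcal{U}$ is an affine subspace whose direction space is $\image(UN)$, so its dimension equals $\rank UN$. This already identifies the correct count, since an affine subspace of dimension $d$ is exactly what a genuine $d$-parameter family describes.

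For the constructive half I would select a maximal linearly independent set of columns of $UN$, collecting them into a full-column-rank submatrix $M$ with $\rank UN$ columns; since $\image(M)=\image(UN)$, the affine map $\zeta\mapsto Ug_0+M\zeta$ from $\mathbb{R}^{\rank UN}$ surjects onto $\mathcal{U}$, giving an explicit parameterization with $\rank UN$ parameters, and $M$ having full column rank makes it injective, so no parameter is redundant. This is the construction invoked in \Cref{theo1}, where one takes $A=\mathcal{A}(w_{\mathrm{d}})$ and $U=U_{\mathrm{hf}}$; the retained columns of $N$ here are precisely those selecting the free degrees of freedom of $u_\mathrm{h}$.

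The main obstacle is the lower-bound (\emph{at least}) direction, i.e.\ that $\mathcal{U}$ cannot be described with fewer than $\rank UN$ parameters, which forces one to fix what counts as a parameterization. Restricted to affine parameterizations $\zeta\mapsto c+V\zeta$ the argument is immediate, because the image of such a map has dimension at most its number of parameters, so surjectivity onto an affine set of dimension $\rank UN$ requires at least $\rank UN$ parameters. I would state this restriction explicitly, as it is the notion of interest here; permitting arbitrary (e.g.\ continuous space-filling) parameterizations would render a bare parameter count meaningless, so the statement is understood in the affine sense consistent with its use in \Cref{theo1}.
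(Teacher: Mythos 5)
Your proof is correct and follows essentially the same route as the paper: write $\mathcal{G}$ as a particular solution plus the null space of $A$ (the paper takes $A^\dagger b$ where you take a generic $g_0$), push it through $U$ to get $\mathcal{U}=Ug_0+\image (UN)$, and read off the parameter count as $\rank UN$. If anything, your version is more complete than the paper's: the paper merely asserts that minimality requires $UN$ to have full column rank, whereas you explicitly construct the minimal parameterization by selecting a maximal independent set of columns of $UN$, and you also make precise the (affine) notion of parameterization under which the ``at least $\rank UN$'' lower bound is meaningful.
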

\begin{proof}
In the case where $\mathcal{G}$ has only one element, $\nul A=\{0\}$. This means that $\rank UN=0$. By definition $\mathcal{U}$ also has only one element, which means the set can indeed be parameterized using zero parameters.
    
In the other case $\mathcal{G}$ has more than one element, meaning $A$ is not full column rank. Writing the number of columns of $N$ as $n_1$, this means the solution set $\mathcal{G}$ can be parameterized as \begin{equation}
    \mathcal{G}=\{A^\dagger b+Nz\ |\ z\in\mathbb{R}^{n_1}\}.\nonumber
\end{equation}

Making use of this we can parameterize $\mathcal{U}$ as follows:\begin{equation}
    \mathcal{U}=\{UA^\dagger b+UNz\ |\ z\in\mathbb{R}^{n_1}\}.\nonumber
\end{equation}

This parameterization is in general not minimal. A minimal parameterization requires that $UN$ has full column rank. From this we can immediately see that the minimal number of parameters is given by $\rank UN$

\end{proof}

\subsection{Required amount of data}
As a further comparison between the found results and the LTI case we will take a look at the minimal amount of data needed in advance to solve the output matching problem. We will focus on the minimal needed trajectory length of $w_{\mathrm{d}}$. The minimal trajectory length is based on the amount of columns needed to fulfill rank condition \cref{poe1}. The relationship between the number of columns $j$ of a block Hankel matrix of a trajectory with length $T$ that has $L$ block rows is $L+j-1=T$.

In the case of output-generalized bilinear systems we have following condition:
\begin{equation}
    \rank\mathcal{H}_{L+T_{\mathrm{ini}}}(w_{\mathrm{d,ext}})=(n_u+n_{\mathrm{nl}})(L+T_{\mathrm{ini}})+\textbf{n}(\mathcal{B}).\nonumber
\end{equation}
Due to the construction of $u_{\mathrm{nl}}$ used in $w_{\mathrm{d,ext}}$, there are $T_{\mathrm{ini}}$ samples we cannot use. These have to be taken into account. We obtain:
\begin{align*}
    T-T_{\mathrm{ini}}&=L+T_{\mathrm{ini}}+j-1\\
    \Rightarrow j&=T-L+1-2T_{\mathrm{ini}}=(n_u+n_{\mathrm{nl}})(L+T_{\mathrm{ini}})+\textbf{n}(\mathcal{B})\\
    \Rightarrow T&=(n_u+n_{\mathrm{nl}}+1)(L+T_{\mathrm{ini}})+\textbf{n}(\mathcal{B})+T_{\mathrm{ini}}-1.
\end{align*}

For an LTI system with the same $\textbf{n}$ we obtain:
\begin{align*} j&=T-L-T_{\mathrm{ini}}+1=n_u(L+T_{\mathrm{ini}})+\textbf{n}(\mathcal{B})\\
    \Rightarrow T&=(n_u+1)(L+T_{\mathrm{ini}})+\textbf{n}(\mathcal{B})-1.
\end{align*}

From this we can see that we need $n_{\mathrm{nl}}(L+T_{\mathrm{ini}})+T_{\mathrm{ini}}$ extra samples compared to the LTI case. If we take the situation $\textbf{n}(\mathcal{B})=T_{\mathrm{ini}}=2$, $L=10$, $n_u=1$ and $n_{\mathrm{nl}}=3$ we obtain a minimal trajectory length of $T=5\cdot 12+3=63$ for the output-generalized bilinear system and $T=2\cdot 12+1=25$ for the LTI case, with indeed a difference of $3\cdot 12+2=38$ samples. Note that this result is for the most general form of this model class. It assumes that to calculate $u_{\mathrm{nl}}$ we need $T_{\mathrm{ini}}$ samples further in the past, counting from the initial trajectory. In the case that $u_{\mathrm{nl}}$ doesn't depend on samples that far back you need less than $T_{\mathrm{ini}}$ extra samples, up to zero extra samples when $u_{\mathrm{nl}}$ only depends on current inputs and outputs.

\section{Conclusion and future work}\label{sec7}
This work has shown the feasibility of using data-driven simulation techniques to solve the output matching problem. It turns out that no constraints have to be added compared to the simulation case. The derived methods offer a way to parameterize the solution set, allowing the end user to choose the solution that best fits their criteria. It is shown that linear parameter-varying systems are a specific case of output-generalized bilinear systems, allowing the results of this work to also be applied to this class of systems.

Preliminary work has been done, but improvements are still possible. The next step is making the proposed method more robust to measurement noise on the trajectories. After that the feasibility of adapting this method for the reference tracking control problem in the general case has to be analyzed. Some other possible improvements are relaxing the bijectivity condition on transformation $h$ to surjectivity, which should suffice for a lot of cases and analyzing the noise distributions arising from popular choices of nonlinear basis functions. Something else to look at is the complexity of solving the structure estimation problem. During this entire work we have made the assumption that the nonlinear transformations are known up to a finite set of basis functions, but even limiting yourself to up to third order monomials can lead to a large number of basis functions for an increasing number of inputs and outputs. Better quantifying this complexity is worth looking into.

\bibliographystyle{IEEEtran}
\bibliography{IEEEabrv,all}
\end{document}